\documentclass[10pt]{article}
\usepackage{graphicx}
\usepackage{amssymb}
\usepackage{color,graphicx}
\usepackage{amsmath,amsthm,amscd}
\usepackage[latin1]{inputenc}
\usepackage{fancyhdr}
\usepackage{pb-diagram}
\numberwithin{equation}{section}
\newtheorem{theorem}{Theorem}[section]

\newtheorem{proposition}[theorem]{Proposition}

\newtheorem{definition}[theorem]{Definition}
\theoremstyle{definition}
\newtheorem{example}[theorem]{Example}

\newtheorem{remark}[theorem]{Remark}










%
%




%
%

%
%
%
%
%
%
%
%

%
%
%
%
%
%
%
%

%
%
%
%
%
%
%

%
%
%
%
%
%
%

%
%
%
%
%
%
%
%
%

%
%
%
%
%

%
%
%
%
%
%
%
%
%
%
%
%
%
%
%
%
%
%

%
%

%
%
%
%
%
%
%
%
%

%

\newfont{\fuentea}{cmsy10 at 16pt}
\newfont{\fuenteb}{cmsy10 at 10pt}
\DeclareFixedFont{\zcal}{OT1}{pzc}{m}{sl}{16pt}%
\DeclareFixedFont{\titulo}{OT1}{pbk}{bx}{sc}{30pt}%
\DeclareFixedFont{\trm}{OT1}{ptm}{b}{sc}{12pt}%
\DeclareFixedFont{\trw}{OT1}{phv}{b}{bx}{14pt}%

\setcounter{MaxMatrixCols}{20} \setlength{\evensidemargin}{0.7cm} \setlength{\textwidth}{15cm}
\setlength{\textheight}{21cm}


\def\@roman#1{\romannumeral #1}
\hoffset-0.3in \voffset-1.3cm \setlength{\oddsidemargin}{9mm}
\setlength{\textheight}{21.3cm}\setlength{\textwidth}{16cm}
\makeatother

\title{\textbf{Elementary matrix-computational proof\\ of Quillen-Suslin theorem for Ore extensions}}
\author{William Fajardo\footnote{The first author was supported by
Institución Universitaria Politécnico Grancolombiano}\\ Oswaldo Lezama\footnote{The second author
was supported by the project \textit{New trends of non-commutative algebra and skew PBW
extensions}, HERMES CODE 26872,
Universidad Nacional de Colombia.} \\
Seminario de Álgebra Constructiva - $\text{SAC}^2$\\
Departamento de Matemáticas\\
Universidad Nacional de Colombia, Bogotá, COLOMBIA\\
\texttt{jolezamas@unal.edu.co}\\
}
\date{}
\begin{document}
\maketitle
\begin{abstract}
\noindent In this short note we present an elementary matrix-constructive proof of Quillen-Suslin
theorem for Ore extensions: If $K$ is a division ring and $A:=K[x;\sigma,\delta]$ is an Ore
extension, with $\sigma$ bijective, then every finitely generated projective $A$-module is free. We
will show an algorithm that computes the basis of a given finitely generated projective module. The
algorithm has been implemented in a computational package, and some illustrative examples are
included.

\bigskip

\noindent \textit{Key words and phrases.} Projective modules, Ore extensions, non-commutative
computational algebra.

\bigskip

\noindent 2010 \textit{Mathematics Subject Classification.} Primary: 16Z05. Secondary: 16D40,
15A21.
\end{abstract}
\section{Introduction}

\noindent When a new type of ring is defined, it is an interesting problem to investigate if the
finitely generated projective modules over it are free. This problem becomes classical after the
formulation in 1955 of the famous Serre's problem about the freeness of finitely generated
projective modules over the polynomial ring $K[x_1,\dots,x_n]$, $K$ a field (see \cite{Artamonov2},
\cite{Artamonov3}, \cite{Bass1}, \cite{Lam}). The Serre's problem was solved positively, and
independently, by Quillen in USA, and by Suslin in Leningrad, USSR (St. Petersburg, Russia) in 1976
(\cite{Quillen}, \cite{Suslin1}).

\begin{definition}\label{PFrings}
Let $S$ be a ring. $S$ is a $\mathcal{PF}$ ring if every finitely generated $($f.g.$)$ projective
$S$-module is free.
\end{definition}

\begin{theorem}[Quillen-Suslin; \cite{Quillen}, \cite{Suslin1}]
$K[x_1,\dots,x_n]$ is $\mathcal{PF}$.
\end{theorem}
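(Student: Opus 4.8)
The plan is to prove the statement by induction on the number of variables $n$, reducing freeness of an arbitrary finitely generated projective module to a single one-variable extension step. A finitely generated projective $R$-module $P$, with $R:=K[x_1,\dots,x_n]$, is a direct summand of a free module, so there is an idempotent matrix $e\in M_m(R)$ with $P\cong\mathrm{Im}(e)$, equivalently $P\oplus Q\cong R^m$ for some $Q$. For the base cases I would observe that when $n=0$ the ring is the field $K$, over which every module is free, and when $n=1$ the ring $K[x]$ is a principal ideal domain, so finitely generated projectives are free by the structure theorem. The inductive step is then the real content: assuming $R$ is $\mathcal{PF}$, show that $R[x]$ is $\mathcal{PF}$ as well.

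First I would set up the two pillars of the classical argument. The first is \emph{Quillen patching}: a finitely generated projective $R[x]$-module $P$ is extended from $R$ (that is, $P\cong R[x]\otimes_R P_0$ for some $R$-module $P_0$) precisely when its localization $P_{\mathfrak m}$ is extended from $R_{\mathfrak m}$ at every maximal ideal $\mathfrak m\subset R$. The second is \emph{Horrocks' theorem} in its local form: if $R$ is local and $P$ is a finitely generated projective $R[x]$-module that becomes free after inverting a single monic polynomial $f\in R[x]$, then $P$ is already free. Granting these, the argument runs as follows: localizing at each $\mathfrak m$, one uses Horrocks to conclude $P_{\mathfrak m}$ is free, hence extended; Quillen patching then forces $P$ itself to be extended, say $P\cong R[x]\otimes_R P_0$; and since $R$ is $\mathcal{PF}$ by the inductive hypothesis, $P_0$ is free, whence $P$ is free.

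Equivalently, and closer in spirit to the matrix-computational viewpoint of this note, I would phrase the inductive step through unimodular rows: it suffices to show that every unimodular row over $R[x]$ can be completed to an invertible matrix, and the heart of this is Suslin's lemma that a unimodular row one of whose entries is \emph{monic} in $x$ can be carried to $(1,0,\dots,0)$ by elementary transformations. The reduction of the general case to the monic case is achieved by a change of coordinates of Nagata--Suslin type, $x_i\mapsto x_i+x_n^{d_i}$, which makes a chosen polynomial monic in $x_n$ after a suitable choice of exponents $d_i$.

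The main obstacle, in either formulation, is the local/monic step. Establishing Horrocks' theorem (respectively Suslin's monic reduction) is where the work concentrates: one must exploit the division algorithm afforded by the monic polynomial to perform explicit elementary row and column operations that trivialize the module, and one must control how these local trivializations glue. The patching principle itself, while conceptually delicate, is comparatively formal once the local statement is in hand; so I would expect to spend most of the effort proving the monic case and verifying that the coordinate change genuinely reduces the general situation to it.
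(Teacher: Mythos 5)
The statement you were asked to prove is one the paper itself never proves: Theorem 1.2 is quoted from Quillen and Suslin, and the only argument the paper contains is for the different, single-variable result that $K[x;\sigma,\delta]$ over a division ring is $\mathcal{PF}$ (Theorem 2.2), established by an elementary induction on the size of an idempotent matrix with an explicit degree-descent on the $(1,1)$ entry. So there is no in-paper proof to compare yours against, and your outline is necessarily a genuinely different route.

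As a roadmap, what you propose is the standard and correct one (essentially the treatment in Lam's monograph, which the paper cites): induct on $n$, with the inductive step carried either by Quillen's local-global patching principle together with the local form of Horrocks' theorem, or by Suslin's unimodular-row argument with a Nagata-type change of variables to force monicity. But be clear about what you have and have not done. Everything of substance --- Quillen patching, Horrocks, Suslin's monic lemma, the verification that the coordinate change works --- is invoked rather than proved, and you flag this yourself; in addition there is one step you pass over silently: to apply Horrocks at a maximal ideal $\mathfrak m$ you must first produce a monic $f$ with $P_{\mathfrak m}[1/f]$ free, which in the standard argument requires showing that finitely generated projectives over $S^{-1}(R_{\mathfrak m}[x])$, $S$ the set of monic polynomials, are free. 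Finally, it is worth noting that the paper's matrix-reduction technique cannot be adapted to give your statement for $n\ge 2$: it relies essentially on every nonzero constant being a unit and on a Euclidean-style degree descent in a single variable, which is precisely what fails for $K[x_1,\dots,x_n]$ and why the classical theorem needs the heavier machinery you describe.
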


The goal of this short paper is to present an elementary matrix-constructive proof of
Quillen-Suslin theorem for single Ore extensions over division rings, i.e, if $K$ is a division
ring and $A:=K[x;\sigma,\delta]$ is an Ore extension, with $\sigma$ a bijective endomorphism of $K$
and $\delta$ a $\sigma$-derivation, then $A$ is $\mathcal{PF}$. Our proof is supported in a matrix
characterization of $\mathcal{PF}$ rings given in \cite{Gallego}.

\begin{proposition}[\cite{Gallego}]\label{6.2.4}
Let $S$ be a ring. $S$ is $PF$ if and only if for every $s\geq 1$, given an idempotent matrix $F\in
M_s(S)$, there exists a matrix $U\in GL_s(S)$ such that
\begin{equation}\label{eq6.2.4}
UFU^{-1}=\begin{bmatrix}0 & 0\\
0 & I_r \end{bmatrix},
\end{equation}
where $r=dim(\langle F\rangle)$, $0\leq r\leq s$, and $\langle F\rangle$ represents the left
$S$-module generated by the rows of $F$. Moreover, a basis of $M$ is given by the last $r$ rows of
$U$.
\end{proposition}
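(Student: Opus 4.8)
The plan is to exploit the standard dictionary between finitely generated projective left $S$-modules and idempotent matrices, being careful throughout that $\langle F\rangle$ denotes the left module $S^{s}F$ spanned by the rows of $F$, where $S^{s}$ is viewed as row vectors and $F$ acts by multiplication on the right. The single observation driving everything is that for an idempotent $F$ one has the internal direct sum $S^{s}=\langle F\rangle\oplus\langle I_{s}-F\rangle$, and right multiplication by $F$ is exactly the projection of $S^{s}$ onto $\langle F\rangle$ along $\langle I_{s}-F\rangle$: indeed $(yF)F=yF$ while $\bigl(y(I_{s}-F)\bigr)F=0$. I will also use repeatedly that an $s\times s$ matrix $U$ lies in $GL_{s}(S)$ if and only if the right-multiplication map $x\mapsto xU$ is a bijection of $S^{s}$, equivalently the rows of $U$ form a free basis of $S^{s}$.

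For the implication ($\Leftarrow$), given any f.g. projective $M$ I would write $M\oplus N\cong S^{s}$ and let $F$ be the matrix of the resulting idempotent endomorphism of $S^{s}$, so that $M\cong\langle F\rangle$. By hypothesis pick $U\in GL_{s}(S)$ with $UFU^{-1}=E$, where $E=\begin{bmatrix}0&0\\0&I_{r}\end{bmatrix}$. Substituting $F=U^{-1}EU$ and using that $x\mapsto xU^{-1}$ is a bijection of $S^{s}$, I obtain $\langle F\rangle=S^{s}F=(S^{s}U^{-1})EU=(S^{s}E)U$. Now $S^{s}E$ is the free module spanned by the last $r$ standard basis vectors, and right multiplication by $U$ is an automorphism sending $e_{i}$ to the $i$-th row of $U$; hence $\langle F\rangle$ is free of rank $r$ with the last $r$ rows of $U$ as a basis. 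This shows $M$ is free and simultaneously yields the ``moreover'' clause.

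For the implication ($\Rightarrow$), assuming $S$ is $\mathcal{PF}$ and $F\in M_{s}(S)$ is idempotent, both $\langle F\rangle$ and $\langle I_{s}-F\rangle$ are f.g. projective, hence free; I would fix a free basis $w_{1},\dots,w_{s-r}$ of $\langle I_{s}-F\rangle$ and $v_{1},\dots,v_{r}$ of $\langle F\rangle$, with $r=\dim\langle F\rangle$. Their concatenation is a basis of $S^{s}$, so the matrix $U$ whose rows are $w_{1},\dots,w_{s-r},v_{1},\dots,v_{r}$, in this order, lies in $GL_{s}(S)$. Since $w_{j}F=0$ and $v_{i}F=v_{i}$, the product $UF$ has zero first $s-r$ rows and $v_{1},\dots,v_{r}$ as its last $r$ rows, which is precisely $EU$; therefore $UFU^{-1}=E$, completing the equivalence.

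The point I expect to require the most care is the well-definedness of $r=\dim\langle F\rangle$: this presupposes that the complementary free summands $\langle F\rangle$ and $\langle I_{s}-F\rangle$ have ranks adding up to $s$, i.e. that $S$ has invariant basis number, which must be invoked or guaranteed by the ambient hypotheses for the integer $r$ in the block form to be meaningful. The remaining verifications—that concatenating bases of complementary summands of $S^{s}$ again gives a basis, and that ``rows form a basis'' is equivalent to $U\in GL_{s}(S)$—are routine once the row/right-multiplication conventions are fixed; the only genuine bookkeeping is matching the ordering of the chosen basis, complement first, to the position of $I_{r}$ in the block matrix.
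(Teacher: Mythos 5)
Your proof is correct and follows the standard dictionary between idempotent matrices and direct summands; note that the paper itself states Proposition \ref{6.2.4} without proof, citing \cite{Gallego}, so there is no in-paper argument to compare against, but your route (the decomposition $S^{s}=\langle F\rangle\oplus\langle I_{s}-F\rangle$ with right multiplication by $F$ as the projection, the characterization of $GL_{s}(S)$ via rows forming a basis, and the identity $UF=EU$) is exactly the expected one. Your caveat about invariant basis number is also apt: it is needed for $r=\dim\langle F\rangle$ to be well defined and for the two complementary free summands to have ranks summing to $s$, and it holds automatically in the paper's setting since $K[x;\sigma,\delta]$ over a division ring is a domain.
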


\section{Quillen-Suslin theorem: Elementary matrix proof}

\noindent In this section we will prove that the Ore extension $K[x;\sigma,\delta]$ is
$\mathcal{PF}$. Despite of this fact is well-known (see \cite{Cohn1}), our proof is elementary and
matrix-constructive, and allow to exhibit an algorithm that computes the basis of a given finitely
generated projective modules.

\begin{theorem}[Quillen-Suslin]\label{theorem9.6.1}
Let $K$ be a division ring and $A:=K[x;\sigma,\delta]$, with $\sigma$ bijective. Then $A$ is
$\mathcal{PF}$.
\end{theorem}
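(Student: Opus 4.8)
The plan is to apply the matrix criterion of Proposition \ref{6.2.4}: to prove $A$ is $\mathcal{PF}$ it suffices to show that, for every $s\geq 1$ and every idempotent $F\in M_s(A)$, there exists $U\in GL_s(A)$ with $UFU^{-1}=\mathrm{diag}(0,\dots,0,I_r)$, where $r=\dim\langle F\rangle$; the last $r$ rows of $U$ then furnish the promised basis. Thus the whole problem is converted into a constructive reduction of idempotent matrices \emph{by conjugation}, performed through elementary operations over $A$.

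First I would record the arithmetic of $A=K[x;\sigma,\delta]$ on which the reduction rests. Since $K$ is a division ring, the product rule for leading terms gives a multiplicative degree, $\deg(fg)=\deg f+\deg g$, so $A$ is a domain; and since $\sigma$ is bijective one has a division algorithm on \emph{both} sides. Consequently $A$ is a two-sided principal ideal domain equipped with a Euclidean (degree) function. This is precisely the input that allows Gaussian-style elimination with matrices over $A$: a nonzero entry can be used, by left and right multiplication with elementary matrices, to lower the degrees of the remaining entries in its row and column until a unit of $K$ appears as pivot.

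The core of the argument is then an induction on $s$. If $F=0$ we are done with $r=0$; otherwise $\langle F\rangle\neq 0$, and I would use the Euclidean algorithm to manufacture a unit pivot among the generators, that is, to transform by invertible row and column operations, assembled into a single conjugating matrix, one standard basis vector into $\langle F\rangle$, splitting off a free rank-one summand. This produces $U_1\in GL_s(A)$ with $U_1FU_1^{-1}$ block-diagonal, equal to $I_1$ (or $0_1$) in the first slot and an idempotent $\widetilde F\in M_{s-1}(A)$ in the complementary block; applying the inductive hypothesis to $\widetilde F$ and composing the conjugations yields $U$. Equivalently, and this is the structural content behind the computation, $\mathrm{Im}(F)$ and $\mathrm{Im}(I_s-F)$ are finitely generated submodules of the free module $A^s$, hence free over the PID $A$, and concatenating the two computed bases gives a basis of $A^s$ adapted to the decomposition, i.e. the matrix $U$.

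I expect the main obstacle to lie in carrying out the elimination by conjugation rather than by free two-sided equivalence: one is not free to choose row and column multipliers independently, so each step must be organized as a genuine similarity $F\mapsto UFU^{-1}$ that still preserves idempotency. The delicate bookkeeping is the non-commutative twisting: leading coefficients are distorted by powers of $\sigma$, while $\delta$ contributes lower-order terms, so at every division step one must track the correct side of multiplication and verify that the Euclidean reduction genuinely terminates in a unit pivot, guaranteeing that a free summand actually splits off. Establishing this termination uniformly and algorithmically is the heart of the proof and the part that makes the construction effective.
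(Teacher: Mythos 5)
Your overall strategy is the paper's: invoke Proposition \ref{6.2.4}, induct on $s$, and reduce the idempotent $F$ by similarity transformations built from elementary matrices, using the degree function of $A=K[x;\sigma,\delta]$. But the proposal stops exactly where the paper's proof actually happens. You correctly identify the obstacle --- elimination must proceed by conjugation $F\mapsto TFT^{-1}$, not by independent two-sided equivalence $F\mapsto PFQ$, so the Smith-form style ``lower the degrees until a unit pivot appears'' is not available --- and then you defer its resolution (``verify that the Euclidean reduction genuinely terminates in a unit pivot'') without supplying an argument. That termination is not automatic: to cancel the leading term of the pivot $f_{11}$ by adding a left multiple of another entry of its column (or a right multiple of another entry of its row), one needs that entry to have degree at most $\deg(f_{11})$, and nothing in the Euclidean structure of $A$ alone guarantees this. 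The paper's key move is to use the \emph{idempotency relations}: after clearing the first row down to $f_{11},f_{12}$ (which can be done by degree reduction among the off-diagonal entries without touching $f_{11}$), the relation $f_{11}^2+f_{12}f_{21}=f_{11}$ forces $\deg(f_{21})\leq\deg(f_{11})$ or $\deg(f_{12})\leq\deg(f_{11})$, whence conjugation by $I_s-a_n\sigma^{n-p}(c_p^{-1})x^{n-p}E_{12}$ (or by $I_s+\sigma^{-q}(b_q^{-1}a_n)x^{n-q}E_{21}$) strictly lowers $\deg(f_{11})$ while preserving idempotency. Iterating drives $f_{11}$ to a unit or to $0$; the endgame when $f_{11}=0$ but the first row is nonzero is settled by another idempotency relation, $g_{12}g_{22}=g_{12}$, which gives $g_{22}=1$ and reduces to the unit-pivot case after a permutation. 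None of this appears in your sketch, so the inductive step is not established.

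Your parenthetical structural argument --- $A^s=\mathrm{Im}(F)\oplus\mathrm{Im}(I_s-F)$, both summands are finitely generated submodules of a free module over the two-sided principal ideal domain $A$, hence free, and IBN makes the ranks add to $s$ --- is a correct proof of the theorem, but it is precisely the classical non-constructive argument (via Cohn) that the paper cites and explicitly sets aside in favor of the elementary matrix computation. If you intend that as your proof, it is complete modulo the standard facts about submodules of free modules over left/right PIDs; if you intend the matrix reduction as your proof, the degree-descent via the idempotency relations is the missing core and must be written out.
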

\begin{proof}
Let $s\geq 1$ and let $F=[f_{ij}]\in M_s(A)$ be an idempotent matrix, the proof is by induction on
$s$ and we will follow a procedure as in Proposition 64 of \cite{Gallego}. We will use the
relations that satisfy the entries of $F$, in particular, the following two relations:
\begin{align*}
& f_{11}^2+f_{12}f_{21}+f_{13}f_{31}+\cdots+f_{1s}f_{s1}=f_{11},\\
& f_{11}f_{12}+f_{12}f_{22}+f_{13}f_{32}+\cdots+f_{1s}f_{s2}=f_{12}.
\end{align*}

$\textbf{s=1}$: In this case $F=[f]$; since $A$ is a domain, its idempotents are trivial, then
$f=1$ or $f=0$ and hence $U=[1]$.

$\textbf{s}\geq \textbf{2}$: Now suppose that the result holds for $s-1$ and let $F=[f_{ij}]\in
M_s(A)$ be an idempotent matrix. We have two possibilities.

(A) All elements in the first row and in the first column of $F$ are zero. Then we apply induction.

(B) Suppose that there exists at least one non zero element in the first row (the reasoning for the
first column is similar); we can assume that this element is $f_{11}$ (if $f_{11}=0$ and
$f_{1j}\neq 0$ then we can change $F$ by $TFT^{-1}$ with $T:=I_s-E_{j1}$). Then arise two
possibilities.

(B1) $\deg(f_{11})=0$, so $f_{11}\in K-0$, i.e., $f_{11}$ is invertible. Then taking

\[U:=\begin{bmatrix} 1& f_{11}^{-1}f_{12}& f_{11}^{-1}f_{13} &\cdots &f_{11}^{-1}f_{1s}\\ -f_{21}f_{11}^{-1} & 1 & 0 &\cdots & 0\\
 -f_{31}f_{11}^{-1} & 0 & 1&\cdots & 0\\ \vdots & & &\cdots & \\-f_{s1}f_{11}^{-1} & 0 & 0& \cdots & 1\end{bmatrix}\]
we have that $U\in GL_{s}(A)$ and its inverse is
\[U^{-1}=\begin{bmatrix} f_{11}& -f_{12}& -f_{13} &\cdots &-f_{1s}\\ f_{21} & -f_{21}f_{11}^{-1}f_{12}+1 & -f_{21}f_{11}^{-1}f_{13}&\cdots
& -f_{21}f_{11}^{-1}f_{1s}\\
 f_{31} & -f_{31}f_{11}^{-1}f_{12} & -f_{31}f_{11}^{-1}f_{13}+1&\cdots & -f_{31}f_{11}^{-1}f_{1s}\\ \vdots & & &\cdots & \\f_{s1}
& -f_{s1}f_{11}^{-1}f_{12} & -f_{s1}f_{11}^{-1}f_{13}& \cdots &
-f_{s1}f_{11}^{-1}f_{1s}+1\end{bmatrix}.\] Moreover, $UFU^{-1}=\begin{bmatrix}1 &
0_{1,s-1}\\0_{s-1,1}& F_{1}\end{bmatrix}$, where $F_{1}\in M_{s-1}(A)$ is an idempotent matrix,
therefore we can apply induction.

(B2) $\deg(f_{11}):=n\geq 1$; since $A$ is a domain at least one non diagonal entry in the first
row and in the first column of $F$ are non zero: In fact, if $f_{12}=\cdots=f_{1s}=0$, then
$f_{11}=1$ or $f_{11}=0$, false; similarly if $f_{21}=\cdots=f_{s1}=0$. Using elementary and
permutation matrices, no affecting the entry $f_{11}$, we can reduce the degrees of $f_{12},
\dots,f_{1s}$ until the situation in which $f_{12}\neq 0$ and $f_{13}=\cdots =f_{1s}=0$ (a similar
reasoning apply for the first column); then we have $f_{11}^2+f_{12}f_{21}=f_{11}$ and $f_{21}\neq
0$; note that $\deg(f_{11}^2)=2n$, so $\deg(f_{21}):=p\leq n$ or $\deg(f_{12}):=q\leq n$; let
$a_n:=lc(f_{11})$, $c_p:=lc(f_{21})$ and $b_q:=lc(f_{12})$.

If $p\leq n$ then
\[
TFT^{-1}=F'=\begin{bmatrix}f_{11}'& f_{12}' & f_{13}' & \dots & f_{1s}'\\f_{21} & f_{22}' & f_{23} & \dots & f_{2s}\\
\vdots & \vdots & \vdots & \dots & \vdots\\
f_{s1} & f_{s2}' & f_{s3} & \dots & f_{ss}\end{bmatrix},
\]
with $T:=I_s-a_n\sigma^{n-p}(c_p^{-1})x^{n-p}E_{12}$; note that $F'$ is idempotent; moreover
$f_{11}'=0$ or $f_{11}'\neq 0$; if $f_{11}'\neq 0$ then arise two options: $\deg(f_{11}')=0$, i.e.,
$f_{11}'\in K-0$ or $1\leq \deg(f_{11}')\leq n-1$ and again $\deg(f_{21})\leq \deg{f_{11}'}$ or
$\deg(f_{12}')\leq \deg{f_{11}'}$.

If $p>n$ but $q\leq n$ then
\[
LFL^{-1}=F''=\begin{bmatrix}f_{11}''& f_{12} & f_{13} & \dots & f_{1s}\\f_{21}'' & f_{22}'' & f_{23}'' & \dots & f_{2s}''\\
\vdots & \vdots & \vdots & \dots & \vdots\\
f_{s1}'' & f_{s2} & f_{s3} & \dots & f_{ss}\end{bmatrix},
\]
with $L:=I_s+\sigma^{-q}(b_q^{-1}a_n)x^{n-q}E_{21}$; note that $F''$ is idempotent; moreover
$f_{11}''=0$ or $f_{11}''\neq 0$; if $f_{11}''\neq 0$ then arise two options: $\deg(f_{11}'')=0$,
i.e., $f_{11}''\in K-0$ or $1\leq \deg(f_{11}'')\leq n-1$ and again $\deg(f_{12})\leq
\deg{f_{11}''}$ or $\deg(f_{21}'')\leq \deg{f_{11}''}$.

We can repeat this reasoning for $F'$ and $F''$ and we obtain an idempotent matrix $G=[g_{ij}]$
similar to $F$ with $g_{11}=0$ or $g_{11}\in K-0$; if $g_{11}\in K-0$ we conclude using the case
(B1). Then assume that $g_{11}=0$; if all elements in the first row and in the first column of $G$
are zero, then we can apply induction and we finish. If not, then in a similar way as was remarked
above, using elementary and permutation matrices, no affecting the first column, in particular the
entry $g_{11}$, we can reduce the degrees of $g_{12},\dots, g_{1s}$ until the situation in which
$g_{12}\neq 0$ and $g_{13}=\dots=g_{1s}=0$ (a similar reasoning apply for the first column); thus,
from $g_{12}g_{22}=g_{12}$ we obtain that $g_{22}=1$ and hence by the permutation matrix $P_{12}$
we finish using the case (B1).
\end{proof}

\section{The algorithm}

\noindent In this section we present the algorithm for computing the matrix $U$ in the proof of
Quillen-Suslin theorem for Ore extensions (Theorem \ref{theorem9.6.1}); the algorithm also
calculates the basis of a given finitely generated projective module (Proposition \ref{6.2.4}). We
present two versions of the algorithm, a constructive simplified version, and a more complete
computational version over fields. The computational version was implemented using ${\rm
Maple}^\circledR$ 2016 (see Remark \ref{remark4.2} below).

\begin{center}
\fbox{\parbox[c]{12cm}{
\begin{center}
{\rm \textbf{Algorithm for the Quillen-Suslin theorem:\\ Constructive version}}
\end{center}
\begin{description}
\item[]{\rm \textbf{INPUT}:} An Ore extension $A:=K[x,\sigma,\delta]$ ($K$ a division ring, $\sigma$ bijective); $F\in M_s(A)$ an idempotent
matrix.
\item[]{\rm \textbf{OUTPUT}:} Matrices $U$, $U^{-1}$ and a
basis $X$ of $\langle F\rangle$, where
\begin{equation}\label{equation14.6.1}
UFU^{-1}=\begin{bmatrix}0 & 0\\
0 & I_r \end{bmatrix} \ \text{and}\ r=dim(\langle F\rangle).
\end{equation}
\item[]{\rm \textbf{INITIALIZATION}:} $F_1:=F$.

{\rm \textbf{FOR}} $k$ from $1$ to $n-1$ {\rm \textbf{DO}}
\begin{enumerate}
\item Follow the reduction procedures
(B1) and (B2) in the proof of Theorem \ref{theorem9.6.1} in order to compute matrices $U_k'$,
$U_k'^{-1}$ and $F_{k+1}$ such that
\begin{equation*}
U_k'F_kU_k'^{-1}=\begin{bmatrix}\alpha_k & 0\\
0 & F_{k+1} \end{bmatrix},\text{ where } \alpha_k\in\{0,1\}.
\end{equation*}
\item $U_{k}:=\begin{bmatrix}I_{k-1} & 0\\
0 & U_{k}'\end{bmatrix}U_{k-1}$; compute $U_k^{-1}$.
\item By permutation matrices modify $U_{n-1}$.
\end{enumerate}
{\rm \textbf{RETURN}} $U:=U_{n-1}$, $U^{-1}$ satisfying (\ref{equation14.6.1}), and a basis $X$ of
$\langle F\rangle$.
\end{description}}}
\end{center}

\begin{example}
For $A:=K[x,\sigma,\delta]$, with $K:=\mathbb{C}$, $\sigma(z):=\overline{z}$ and $\delta:=0$, we
consider in $M_4(A)$ the idempotent matrix {\tiny
\[
F=
\begin{bmatrix}
1-ix-x^2+(1+i)x^3 & -1+(2-i)x^2+(-1-i)x^3 & -i-x+(1+i)x^2 & 1+ix+(-1+i)x^2\\
-ix+(1+i)x^3      & ix+(1-i)x^2+(-1-i)x^3 & -i+(1+i)x^2   & 1+(-1+i)x^2\\
ix^2              & -x-ix^2               & 1+ix          & x\\
x^3-x^2           & -ix+(1-i)x^2-x^3      & x^2-x         & 1+ix+ix^2
\end{bmatrix}.
\]}
We apply the constructive version of the Quillen-Suslin algorithm, i.e., following the reductions
(B1) and (B2), we compute the matrices $U_k$ and $F_k$, for $1\leq k\leq 3$: {\tiny
\[
U_1 =
\begin{bmatrix}
1-ix-x^2+(1+i)x^3 &  -1+(2-i)x^2+(-1-i)x^3 &  -i-x+(1+i)x^2 &  1+ix+(-1+i)x^2\\
x &  -i-x &  1 &  i\\
0 &  1 &  0 &  0\\
0 &  0 &  0 &  1
\end{bmatrix},
\]}
\[
U_1^{-1} =
\begin{bmatrix}
1 &  i+x+(-1-i)x^2 &  0 &  0\\
0 &  0 &  1 &  0\\
-x &  1+ix-x^2+(1-i)x^3 &  i+x &  -i\\
0 &  0 &  0 &  1
\end{bmatrix},
\]
\[
U_1FU_1^{-1}=
\begin{bmatrix}
1 &  0 &  0 &  0\\
0 &  0 &  0 &  0\\
0 &  -i+(1+i)x^2 &  1 &  0\\
0 &  x^2-x &  0 &  1
\end{bmatrix},
F_2 =
\begin{bmatrix}
0 &  0 &  0\\
-i+(1+i)x^2 &  1 &  0\\
x^2-x &  0 &  1
\end{bmatrix};
\]
{\tiny
\[
U_2=
\begin{bmatrix}
1-ix-x^2+(1+i)x^3 &  -1+(2-i)x^2+(-1-i)x^3 &  -i-x+(1+i)x^2 &  1+ix+(-1+i)x^2\\
ix+(-1-i)x^3 &  -ix+(-1+i)x^2+(1+i)x^3 &  i+(-1-i)x^2 &  -1+(1-i)x^2\\
x &  -i-x &  1 &  i\\
0 &  0 &  0 &  1
\end{bmatrix},
\]}
\[
U_2^{-1} =
\begin{bmatrix}
1 &  0 &  i+x+(-1-i)x^2 &  0\\
0 &  -1 &  i+(-1-i)x^2 &  0\\
-x &  -i-x &  -ix^2 &  -i\\
0 &  0 &  0 &  1
\end{bmatrix},
\]
\[
U_2FU_2^{-1}=
\begin{bmatrix}
1 &  0 &  0 &  0\\
0 &  1 &  0 &  0\\
0 &  0 &  0 &  0\\
0 &  0 &  x^2-x &  1
\end{bmatrix},
F_3=
\begin{bmatrix}
0 &  0\\
x^2-x &  1
\end{bmatrix};
\]
{\tiny
\[
U_3=
\begin{bmatrix}
1-ix-x^2+(1+i)x^3 &  -1+(2-i)x^2+(-1-i)x^3 &  -i-x+(1+i)x^2 &  1+ix+(-1+i)x^2\\
ix+(-1-i)x^3 &  -ix+(-1+i)x^2+(1+i)x^3 &  i+(-1-i)x^2 &  -1+(1-i)x^2\\
-x^3+x^2 &  ix+(-1+i)x^2+x^3 &  -x^2+x &  -1-ix-ix^2\\
x &  -i-x &  1 &  i
\end{bmatrix},
\]}
\[
U_3^{-1}=
\begin{bmatrix}
1 &  0 &  0 &  i+x+(-1-i)x^2\\
0 &  -1 &  0 &  i+(-1-i)x^2\\
-x &  -i-x &  i &  -ix\\
0 &  0 &  -1 &  -x^2+x
\end{bmatrix},
\]
\[
U_3FU_3^{-1}=
\begin{bmatrix}
1 &  0 &  0 &  0\\
0 &  1 &  0 &  0\\
0 &  0 &  1 &  0\\
0 &  0 &  0 &  0
\end{bmatrix},
F_4=
\begin{bmatrix}
0
\end{bmatrix}.
\]
Finally, using permutation matrices, we get {\tiny
\[
U=
\begin{bmatrix}
x                 &                  -i-x  &             1 & i \\
1-ix-x^2+(1+i)x^3 & -1+(2-i)x^2+(-1-i)x^3  & -i-x+(1+i)x^2 & 1+ix+(-1+i)x^2\\
ix+(-1-i)x^3      & -ix+(-1+i)x^2+(1+i)x^3 & i+(-1-i)x^2   & -1+(1-i)x^2\\
-x^3+x^2          & ix+(-1+i)x^2+x^3       & -x^2+x        & -1-ix-ix^2
\end{bmatrix},
\]}

\[
U^{-1}=
\begin{bmatrix}
i+x+(-1-i)x^2 &  1 &  0   & 0\\
i+(-1-i)x^2   &  0 & -1   & 0\\
-ix           & -x & -i-x & i\\
-x^2+x        &  0 &  0   & -1
\end{bmatrix},
\]
\[
UFU^{-1}=
\begin{bmatrix}
0 & 0 & 0 & 0\\
0 & 1 & 0 & 0\\
0 & 0 & 1 & 0\\
0 & 0 & 0 & 1
\end{bmatrix}.
\]
So, $r=3$ and the last three rows of $U$ conform a basis
$X=\{\emph{\textbf{x}}_1,\textbf{\emph{x}}_2,\textbf{\emph{x}}_3\}$ of $\langle F\rangle$, {\small
\begin{center}
$\textbf{\emph{x}}_1=(1-ix-x^2+(1+i)x^3, -1+(2-i)x^2+(-1-i)x^3, -i-x+(1+i)x^2, 1+ix+(-1+i)x^2)$,

$\textbf{\emph{x}}_2=(ix+(-1-i)x^3, -ix+(-1+i)x^2+(1+i)x^3, i+(-1-i)x^2, -1+(1-i)x^2)$,

$\textbf{\emph{x}}_3=(-x^3+x^2, ix+(-1+i)x^2+x^3, -x^2+x, -1-ix-ix^2)$.
\end{center}}
\end{example}

Next we present a second illustration of the constructive algorithm.

\begin{example}
Let $M_4(A)$, where $A:=K[x,\sigma,\delta]$, $K:=\mathbb{Q}(t)$, $\sigma:=id_{\mathbb{Q}(t)}$ and
$\delta:=\frac{d}{dt}$; we consider the idempotent matrix $F:=[F^{(1)} \ F^{(2)} \ F^{(3)}\
F^{(4)}]$, $F^{(i)}$ the $ith$ column of $F$, where
\[
F^{(1)}=:
\begin{bmatrix}
2+2t+(13t^2-5t)x+(8t^3-6t^2)x^2+t^3(t-1)x^3\\
2t^2+t+(13t^3-8t^2)x+(8t^4-7t^3)x^2+t^4(t-1)x^3\\
3t+2+(14t^2-8t)x+(8t^3-7t^2)x^2+t^3(t-1)x^3\\
t^2+t+(t^3+6t^2)x+6t^3x^2+t^4x^3
\end{bmatrix},
\]
\[
F^{(2)}=:
\begin{bmatrix}
-t^3x^3-5t^2x^2-3tx+1\\
t+(-3t^2+2t)x+(-5t^3+t^2)x^2-t^4x^3\\
-t^3x^3-5t^2x^2-3tx+1\\
-t^3x^3-5t^2x^2-3tx+1
\end{bmatrix},
\]
\[
F^{(3)}=:
\begin{bmatrix}
t^3x^3+5t^2x^2+3tx-1\\
t^4x^3+5t^3x^2+2t^2x-2t\\
-t-1+(-t^2+5t)x+6t^2x^2+t^3x^3\\
-t^2+t+(-t^3+6t^2)x+2t^3x^2
\end{bmatrix},
\]
\[
F^{(4)}=:
\begin{bmatrix}
0\\
tx\\
tx\\
1+(t^2-2t)x-t^2x^2
\end{bmatrix}.
\]
Applying the algorithm we obtain
\[
U^{(1)}=:
\begin{bmatrix}
2t+1+(10t^2-5t)x+(7t^3-6t^2)x^2+(t^4-t^3)x^3\\
-3t-2+(-14t^2+8t)x+(-8t^3+7t^2)x^2+(-t^4+t^3)x^3\\
-2t+2-t(t-1)x\\
-2t^2+7t-2-t(4t^2-21t+10)x-t^2(t^2-10t+7)x^2+t^3(t-1)x^3
\end{bmatrix},
\]
\[
U^{(2)}=:
\begin{bmatrix}
-t^3x^3-4t^2x^2-tx\\
t^3x^3+5t^2x^2+3tx-1\\
tx+1\\
2t(t-3)x+t^2(t-6)x^2-t^3x^3
\end{bmatrix},
\]
\[
U^{(3)}=:
\begin{bmatrix}
-t-1+(-t^2+3t)x+5t^2x^2+t^3x^3\\
t+2+(t^2-5t)x-6t^2x^2-t^3x^3\\
-tx-1\\
-t+1-t(2t-7)x-t^2(t-6)x^2+t^3x^3
\end{bmatrix},
\]
\[
U^{(4)}=:
\begin{bmatrix}
tx\\
-tx\\
0\\
1
\end{bmatrix};
\]
\[
(U^{-1})^{(1)}=:
\begin{bmatrix}
tx+1\\
t-2+t(t-1)x\\
0\\
-t+2-t(t-4)x+t^2x^2
\end{bmatrix},
\]
\[
(U^{-1})^{(2)}=:
\begin{bmatrix}
tx+1\\
t-1+t(t-1)x\\
1\\
1+(-t^2+3t)x+t^2x^2
\end{bmatrix},
\]
{\small
\[
(U^{-1})^{(3)}=:
\begin{bmatrix}
-t^2x^2-2tx+1\\
t+(-4t^2+4t)x+(-2t^3+5t^2)x^2+t^3x^3\\
1+(-2t^2+t)x+(-t^3+4t^2)x^2+t^3x^3\\
1+(-2t^3+8t^2-5t)x+(-t^4+11t^3-18t^2)x^2+(2t^4-9t^3)x^3-t^4x^4
\end{bmatrix},
\]}
\[
(U^{-1})^{(4)}=:
\begin{bmatrix}
0\\
tx\\
tx\\
1+(t^2-2t)x-t^2x^2
\end{bmatrix}.
\]
With these computations we have
\[
UFU^{-1}=
\begin{bmatrix}
0 &  0 &  0 &  0\\
0 &  0 &  0 &  0\\
0 &  0 &  1 &  0\\
0 &  0 &  0 &  1
\end{bmatrix},
\]
thus, $r=2$ and a base of $\langle F\rangle$ is $X=\{\emph{\textbf{x}}_1,\textbf{\emph{x}}_2\}$,
with
\begin{center}
$\emph{\textbf{x}}_1=(-2t+2-t(t-1)x, tx+1, -tx-1, 0)$,

{\tiny $\textbf{\emph{x}}_2=(-2t^2+7t-2-t(4t^2-21t+10)x-t^2(t^2-10t+7)x^2+t^3(t-1)x^3,
2t(t-3)x+t^2(t-6)x^2-t^3x^3, -t+1-t(2t-7)x-t^2(t-6)x^2+t^3x^3, 1)$.}
\end{center}
\end{example}

{\footnotesize
\begin{center}
\fbox{\parbox[c]{12cm}{
\begin{center}
{\rm \textbf{Algorithm for the Quillen-Suslin theorem:\\ Computational version}}
\end{center}
{\rm \textbf{REQUIRE}:} $A:=K[x;\sigma,\delta]$ and an idempotent matrix $F\in M_s(A)$.
\begin{enumerate}
\item[1:]$k:=0$, $F':=F$;
\item[2:]\textbf{WHILE} $k<s-1$ \textbf{DO}
\item[3:]\quad $k:=k+1$
\item[4:]\quad \textbf{IF} $\max\{\deg(f'_{ij})\mid i=1$ or $j=1\}=-\infty$ \textbf{THEN}
\item[5:]\quad \quad $F':=SubMatrix(F',2..s,2..s)$;
\item[6:]\quad \textbf{ELSE}
\item[7:]\quad \quad (B):
\item[8:]\quad \quad \textbf{IF} $f'_{11}=0$ \textbf{THEN}
\item[9:]\quad \quad \quad if ($f'_{1k}\ne0$) $F':=T_{k1}(-1)F'T_{k1}(-1)^{-1}$;
\item[] \quad \quad \quad if ($f'_{k1}\ne0$) $F':=T_{1k}(-1)F'T_{1k}(-1)^{-1}$;
\item[10:]\quad \quad \textbf{END IF}
\item[11:]\quad \quad (B1):
\item[12:]\quad \quad \textbf{IF} $f'_{11}\in K-\{0\}$ \textbf{THEN}
\item[13:]\quad \quad \quad Apply: OrderReduction1;
\item[14:]\quad \quad \textbf{ELSE}
\item[15:]\quad \quad \quad Apply: (B2) OrderReduction2;
\item[16:]\quad \quad \textbf{END IF}
\item[17:]\quad \textbf{END IF}
\item[18:]\textbf{END WHILE}
\item[19:]\textbf{RETURN} Matrices $U,U^{-1},UFU^{-1}$; a basis $X$ of $\langle F\rangle$;
process step by step.
\end{enumerate}
}}
\end{center}}

\begin{example}
In this example we will illustrate the computational version of the Quillen-Suslin algorithm; let
$M_3(A)$, where $A:=K[x,\sigma,\delta]$, $K:=\mathbb{Q}(t)$,
$\sigma(\frac{p(t)}{q(t)}):=\frac{p(t-1)}{q(t-1)}$ and $\delta:=0$; we have the idempotent matrix
\[
F^{}=
\begin{bmatrix}
1-\frac{2t}{1+t}x & 2t-\frac{2t(3+2t)}{1+t}x & \frac{2t}{(1+t)^2}x\\
\frac{1}{1+t}x & \frac{3+2t}{1+t}x & \frac{-1}{(1+t)^2}x\\
\frac{t}{1+t}x & -t+\frac{t(3+2t)}{1+t}x & 1-\frac{t}{(1+t)^2}x
\end{bmatrix}.
\]
Let $F':=F$, along the example, we will replace the matrices $F'$, $U$ and $U^{-1}$ for the new versions given by
the procedures of the algorithm.

\textit{Step 1}. Since $f'_{11}=1-\frac{2t}{1+t}x$, we will apply the reduction procedure of (B2),
i.e, OrderReduction2:

\textit{Step1.1}: The idea is to convert $f'_{1,i}=0$ for $i>2$ and $f'_{1,2}\neq 0$.

Applying first $T_{2,3}(\frac{-1}{t(1+2t)})$, then $T_{3,2}(t(1+2t)-\frac{t(3+2t)(1+2t)}{1+t}x)$,
and finally permuting the rows and columns $2$ and $3$, we get
\[
UFU^{-1}=
\begin{bmatrix}
1-\frac{2t}{1+t}x & \frac{2}{1+2t} & 0\\
\frac{t(1+2t)}{1+t}x-\frac{2t(1+2t)}{t+2}x^2 & \frac{2t(1+2t)}{(3+2t)(1+t)}x & 0\\
\frac{2t}{(1+2t)(1+t)}x & \frac{-2}{(1+2t)^2} & 1
\end{bmatrix},
\]
where
\[
U =
\begin{bmatrix}
1 & 0 & 0\\
0 & t(1+2t)-\frac{t(3+2t)(1+2t)}{1+t}x & \frac{t(1+2t)}{(1+t)^2}x\\
0 & 1 & \frac{-1}{t(1+2t)}
\end{bmatrix},
\]
\[
U^{-1}=
\begin{bmatrix}
1 & 0 & 0\\
0 & \frac{1}{t(1+2t)} & \frac{3+2t}{1+t}x\\
0 & 1 & -t(1+2t)+\frac{t(3+2t)(1+2t)}{1+t}x
\end{bmatrix}.
\]
\textit{Step 1.2}. Since the new $F'$ is
\[
F'=
\begin{bmatrix}
1-\frac{2t}{1+t}x & \frac{2}{1+2t} & 0\\
\frac{t(1+2t)}{1+t}x-\frac{2t(1+2t)}{t+2}x^2 & \frac{2t(1+2t)}{(3+2t)(1+t)} & 0\\
\frac{2t}{(1+2t)(1+t)}x & \frac{-2}{(1+2t)^2} & 1
\end{bmatrix},
\]
we want to reduce the degree of $f'_{1,1}$; for this we apply $T_{2,1}(\frac{-t(1+2t)}{(1+t)}x)$ and
we obtain
\[
UFU^{-1}=
\begin{bmatrix}
1 & \frac{2}{1+2t}& 0\\
0 & 0 & 0\\
0 & \frac{-2}{(1+2t)^2} & 1
\end{bmatrix},
\]
where the new $U$ and $U^{-1}$ are
\[
U=
\begin{bmatrix}
1& 0& 0\\
\frac{-t(1+2t)}{1+t}x & t(1+2t)-\frac{t(3+2t)(1+2t)}{1+t}x & \frac{t(1+2t)}{(1+t)^2}x\\
0& 1& \frac{-1}{t(1+2t)}
\end{bmatrix},
\]
\[
U^{-1}=
\begin{bmatrix}
1& 0& 0\\
\frac{x}{1+t} & \frac{1}{t(1+2t)} & \frac{3+2t}{1+t}x\\
\frac{t(1+2t)}{1+t}x & 1 & -t(1+2t)+\frac{t(3+2t)(1+2t)}{1+t}x
\end{bmatrix}.
\]
\textit{Step 2}. The new $F'$ is
\[
F' =
\begin{bmatrix}
1 & \frac{2}{1+2t} & 0\\
0 & 0 & 0\\
0 & \frac{-2}{(1+2t)^2} & 1
\end{bmatrix};
\]
since $f'_{1,1}=1$ we apply (B1), i.e., OrderReduction1, for this we consider the matrices
\[
S=
\begin{bmatrix}
1 & \frac{2}{1+2t} & 0\\
0 & 1 & 0\\
0 & 0 & 1
\end{bmatrix},
\text{ and }
S^{-1}=
\begin{bmatrix}
1 & \frac{-2}{1+2t} & 0\\
0 & 1 & 0\\
0 & 0 & 1
\end{bmatrix},
\]
and then
\[
S\,F'S^{-1}=
\begin{bmatrix}
1& 0& 0\\
0& 0& 0\\
0& \frac{-2}{(1+2t)^2} & 1
\end{bmatrix}.
\]
Therefore, the new $F'$ is
\[
F'=\begin{bmatrix}
0 & 0\\
\frac{-2}{(1+2t)^2} & 1
\end{bmatrix},
\text{ and }
UFU^{-1}=
\begin{bmatrix}
1& 0& 0\\
0& 0& 0\\
0& \frac{-2}{(1+2t)^2} & 1
\end{bmatrix},
\]
where the new $U$ and $U^{-1}$ are
\[
U=
\begin{bmatrix}
1-\frac{2t}{1+t}x & 2t-\frac{2t(3+2t)}{1+t}x & \frac{2t}{(1+t)^2}x\\
\frac{-t(1+2t)}{1+t}x & t(1+2t)-\frac{t(3+2t)(1+2t)}{1+t}x & \frac{t(1+2t)}{(1+t)^2}x\\
0 & 1 & \frac{-1}{t(1+2t)}
\end{bmatrix},
\]
\[
U^{-1}=
\begin{bmatrix}
1 & \frac{-2}{1+2t} & 0\\
\frac{x}{1+t} & \frac{1}{t(1+2t)}-\frac{2}{(1+t)(3+2t)}x & \frac{3+2t}{1+t}x\\
\frac{t(1+2t)}{1+t}x & 1-\frac{2t(1+2t)}{(1+t)(3+2t)}x & -t(1+2t)+\frac{t(3+2t)(1+2t)}{1+t}x
\end{bmatrix}.
\]
Since $f'_{1,1}=0$, we apply $T_{1,2}(-1)$, we get
\[
UFU^{-1}=
\begin{bmatrix}
1& 0& 0\\
0& \frac{2}{(1+2t)^2} & \frac{-4t^2-4t+1}{(1+2t)^2}\\
0& \frac{-2}{(1+2t)^2} & \frac{4t^2+4t-1}{(1+2t)^2}
\end{bmatrix}\text{ and }
F'=
\begin{bmatrix}
\frac{2}{(1+2t)^2} & \frac{-4t^2-4t+1}{(1+2t)^2}\\
\frac{-2}{(1+2t)^2}& \frac{4t^2+4t-1}{(1+2t)^2}
\end{bmatrix},
\]
where the new $U$ and $U^{-1}$ are
\[
U=
\begin{bmatrix}
1-\frac{2t}{1+t}x & 2t-\frac{2t(3+2t)}{1+t}x & \frac{2t}{(1+t)^2}x\\
\frac{-t(1+2t)}{1+t}x & 2t^2+t-1-\frac{t(3+2t)(1+2t)}{1+t}x & \frac{1}{t(1+2t)}+\frac{t(1+2t)}{(1+t)^2}x\\
0 & 1 & \frac{-1}{t(1+2t)}
\end{bmatrix},
\]
\[
U^{-1}=
\begin{bmatrix}
1 & \frac{-2}{1+2t} & \frac{-2}{1+2t}\\
\frac{1}{1+t}x & \frac{1}{t(1+2t)}-\frac{2}{(1+t)(3+2t)}x & \frac{1}{t(1+2t)}+\frac{4t^2+12t+7}{(1+t)(3+2t)}x\\
\frac{t(1+2t)}{1+t}x & 1-\frac{2t(1+2t)}{(1+t)(3+2t)}x & -2t^2-t+1+\frac{t(1+2t)(4t^2+12t+7)}{(1+t)(3+2t)}x
\end{bmatrix}.
\]
Since $f'_{1,1}=\frac{2}{(1+2t)^2}$ is invertible, we apply OrderReduction1 with matrices
\[
T=
\begin{bmatrix}
1& -2t^2-2t+\frac{1}{2}\\
1& 1
\end{bmatrix}
\text{ and }
T^{-1} =
\begin{bmatrix}
\frac{2}{(1+2t)^2} & \frac{4t^2+4t-1}{(1+2t)^2}\\
\frac{-2}{(1+2t)^2} & \frac{2}{(1+2t)^2}
\end{bmatrix},
\]
so
\[
T\,F'T^{-1}=
\begin{bmatrix}
1 & 0& 0\\
0 & 1& 0\\
0 & 0& 0
\end{bmatrix}.
\]
Thus, the new $F'$ is
\[
F'=
\begin{bmatrix}
0
\end{bmatrix}
\text{ and }
UFU^{-1}=
\begin{bmatrix}
1 & 0& 0\\
0 & 1& 0\\
0 & 0& 0
\end{bmatrix},
\]
where the new $U$ and $U^{-1}$ are
\[
U=
\begin{bmatrix}
1-\frac{2t}{1+t}x & 2t-\frac{2t(3+2t)}{1+t}x & \frac{2t}{(1+t)^2}x\\
\frac{-t(1+2t)}{1+t}x & -t-\frac{1}{2}-\frac{t(3+2t)(1+2t)}{1+t}x & \frac{1+2t}{2t}+\frac{t(1+2t)}{(1+t)^2}x\\
\frac{-t(1+2t)}{1+t}x& 2t^2+t-\frac{t(3+2t)(1+2t)}{1+t}x & \frac{t(1+2t)}{(1+t)^2}x
\end{bmatrix},
\]
\[
U^{-1}=
\begin{bmatrix}
1& 0& \frac{-2}{1+2t}\\
\frac{x}{1+t} & \frac{-2}{(1+t)(3+2t)}x & \frac{1}{t(1+2t)}\\
\frac{t(1+2t)}{1+t}x & \frac{2t}{1+2t}-\frac{2t(1+2t)}{(1+t)(3+2t)}x & \frac{1}{1+2t}
\end{bmatrix}.
\]
Permuting, we have finally
\[
UFU^{-1}=
\begin{bmatrix}
0& 0& 0\\
0& 1& 0\\
0& 0& 1
\end{bmatrix},
\]
where the new $U$ and $U^{-1}$ are
\[
U=
\begin{bmatrix}
\frac{-t(1+2t)}{1+t}x & 2t^2+t-\frac{t(3+2t)(1+2t)}{1+t}x & \frac{t(1+2t)}{(1+t)^2}x\\
1-\frac{2t}{1+t}x & 2t-\frac{2t(3+2t)}{1+t}x & \frac{2t}{(1+t)^2}x\\
\frac{-t(1+2t)}{1+t}x & -t-\frac{1}{2}-\frac{t(3+2t)(1+2t)}{1+t}x & \frac{1+2t}{2t}+\frac{t(1+2t)}{(1+t)^2}x
\end{bmatrix},
\]
\[
U^{-1}=
\begin{bmatrix}
\frac{-2}{1+2t} & 1 & 0\\
\frac{1}{t(1+2t)} & \frac{1}{1+t}x & \frac{-2}{(1+t)(3+2t)}x\\
\frac{1}{1+2t} & \frac{t(1+2t)}{1+t}x & \frac{2t}{1+2t}-\frac{2t(1+2t)}{(1+t)(3+2t)}x
\end{bmatrix}.
\]
Therefore, $r=2$ and the last two rows of $U$ conform a basis
$X=\{\emph{\textbf{x}}_1,\textbf{\emph{x}}_2\}$, of $\langle F\rangle$,
\begin{center}
$\emph{\textbf{x}}_1=(1-\tfrac{2t}{1+t}x,2t-\tfrac{2t(3+2t)}{1+t}x,\tfrac{2t}{(1+t)^2}x)$,
$\emph{\textbf{x}}_2=(\tfrac{-t(1+2t)}{1+t}x,
-t-\tfrac{1}{2}-\tfrac{t(3+2t)(1+2t)}{1+t}x,\tfrac{1+2t}{2t}+\tfrac{t(1+2t)}{(1+t)^2}x)$.
\end{center}

\end{example}

\begin{example}
Let $M_4(A)$, where $A:=K[x,\sigma,\delta]$, $K:=\mathbb{Q}(t)$, $\sigma(f(t)):=f(qt)$ and
$\delta(f(t)):=\tfrac{f(qt)-f(t)}{t(q-1)}$, where $q\in K-\{0,1\}$; we consider the idempotent
matrix $F:=[F^{(1)} \ F^{(2)} \ F^{(3)}\ F^{(4)}]$, $F^{(i)}$ the $ith$ column of $F$ and
$a\in\mathbb{Q}$, where

\[
F^{(1)}=
\begin{bmatrix}
-t^{2}qx^2\\
\left(-ta+2\,t \right)x-2\,a+2\\
tx+2\\
-1
\end{bmatrix},
\]

\[
F^{(2)}=
\begin{bmatrix}
-2\,tx+2\\
-{t}^{2}q{x}^{2}+ \left( ta-4\,t \right) x+2\,a-1\\
-tx-2\\
tx+2
\end{bmatrix},
\]

\[
F^{(3)}=
\begin{bmatrix}
-tx-2\\
\left( -2\,{t}^{2}qa+3\,{t}^{2}q \right) {x}^{2}+ \left( {a}^{2}t-8\,ta+8\,t \right) x+2\,{a}^{2}-3\,a+1\\
{t}^{2}q{x}^{2}+ \left( -ta+4\,t \right) x-2\,a+2\\
\left( ta-2\,t \right) x+2\,a-2
\end{bmatrix},
\]

\[
F^{(4)}=
\begin{bmatrix}
-{t}^{3}{q}^{3}{x}^{3}+\left( -{q}^{2}{t}^{2}-5\,{t}^{2}q \right) {x}^{2}-5\,tx+2\\
-{t}^{3}{q}^{3}{x}^{3}+\left( -{q}^{2}{t}^{2}-3\,{t}^{2}q \right) {x}^{2}+ \left( -ta+t \right) x-2\,a+2\\
tx+2\\
{t}^{2}q{x}^{2}+2\,tx-1
\end{bmatrix}.
\]
Applying the algorithm we obtain
\[
U =
\begin{bmatrix}
tx+1 & 0 & {t}^{2}q{x}^{2}+2\,tx-1 & {t}^{2}q{x}^{2}+3\,tx\\
1 & -tx-2 & \left( -ta+2\,t \right) x-2\,a+2 & -{t}^{2}q{x}^{2}-2\,tx+2\\
tx-1 & 1 & {t}^{2}q{x}^{2}+a-1 & {t}^{2}q{x}^{2}+2\,tx-1\\
1 & 0 & tx & tx+1
\end{bmatrix},
\]
{\tiny
\[
U^{-1}=
\begin{bmatrix}
tx & -1 & -tx-2 & 0\\
a-1 & -tx+a-1 & -{t}^{2}q{x}^{2}+ \left( ta-4\,t \right) x+2\,a-1 & {t}^{3}{q}^{3}{x}^{3}-
\left( -q+a-4 \right) {t}^{2}q{x}^{2}+ \left( -3\,ta+3\,t \right) x+1\\
-1 & -1 & -tx-2 & {t}^{2}q{x}^{2}+3\,tx\\
0 & 1 & tx+2 & -{t}^{2}q{x}^{2}-2\,tx+1
\end{bmatrix},
\]}
\[
UFU^{-1} =
\begin{bmatrix}
0 & 0 & 0 & 0\\
0 & 0 & 0 & 0\\
0 & 0 & 1 & 0\\
0 & 0 & 0 & 1
\end{bmatrix},
\]

Therefore, $r=2$ and the last two rows of $U$ conform a basis
$X=\{\emph{\textbf{x}}_1,\textbf{\emph{x}}_2\}$, of $\langle F\rangle$,
\begin{center}
$\emph{\textbf{x}}_1=(tx-1,1,{t}^{2}q{x}^{2}+a-1,{t}^{2}q{x}^{2}+2\,tx-1)$,
$\emph{\textbf{x}}_2=(1,0,tx,tx+1)$.
\end{center}

\end{example}

\section{Some remarks about the implementation}

\noindent In this final section we present some comments about the implementation of the
computational version of the Quillen-Suslin algorithm.

\begin{remark}
The OrderReduction1 is based in the implementation of the procedure $(B1)$ in the proof of Theorem
\ref{theorem9.6.1}; for the OrderReduction2, the following algorithm describes its functionality:
{\footnotesize
\begin{center}
\fbox{\parbox[c]{12cm}{
\begin{center}
{\rm \textbf{Algorithm OrderReduction2}}
\end{center}
{\rm \textbf{REQUIRE}:} $A:=K[x;\sigma,\delta]$ and an idempotent matrix $F\in M_s(A)$ with
$\deg(f_{11})\ge1$.
\begin{enumerate}
\item[1:]\quad Make $f_{1,j}=0$ for $j>2$ and $f_{1,2}\ne0$;
\item[2:]\quad Reduce degree of $f_{1,1}$;
\item[3:]\quad \textbf{IF} {$f_{1,1}=0$}
\item[4:]\quad\quad \textbf{IF} {$\max\{\deg(f_{i,j})>0\mid i=1$ or $j=1\}>0$}
\item[5:]\quad\quad\quad Make $f_{1,j}=0$ for $j>2$ and $f_{1,2}\ne0$;
\item[6:]\quad\quad\quad $F:=P_{12}\,F\,P_{12}$;
\item[7:]\quad\quad\quad Apply: OrderReduction1;
\item[8:]\quad\quad \textbf{ELSE}
\item[9:]\quad\quad\quad $F':=SubMatrix(F,2..s,2..s)$;
\item[10:]\quad\quad \textbf{ENDIF}
\item[11:]\quad \textbf{ELSE}
\item[12:]\quad\quad Apply: OrderReduction1;
\item[13:]\quad \textbf{ENDIF}
\item[14:]\textbf{RETURN} Matrices $U,U^{-1}, F'$ and $UFU^{-1}=\begin{bmatrix}\alpha & 0\\
0 & F' \end{bmatrix}$, with $\alpha\in\{0,1\}$.
\end{enumerate}
}}
\end{center}}
\end{remark}

\begin{remark}\label{remark4.2}
For the implementation of the Quillen-Suslin algorithm we used ${\rm Maple}^\circledR$ 2016, and we
create a library called \texttt{OrePolyToolKit.lib} consisting in two packages:
\begin{itemize}
\item \textbf{\texttt{OrePolyUtility}}: This is a new useful collection of functions for operating matrices, vectors
and lists over an \texttt{UnivariateOreRing} $K[x;\sigma,\delta]$; the \texttt{UnivariateOreRing}
structure was taken from the library \texttt{OreTools} within the standard Maple libraries.
\item \textbf{\texttt{OrePolyQS}}: This is the most important new collection of functions related to the Quillen-Suslin
algorithm over $K[x;\sigma,\delta]$; the main routine of the algorithm was implemented here, the
following functions of this package are fundamentals:
\begin{itemize}
\item \textbf{\texttt{GenerateIdemp}}: This function generates idempotent matrices over $K[x;\sigma,\delta]$, the arguments
are the matrix order and the \texttt{UnivariateOreRing}, and return an idempotent matrix of the
given dimension over the respective \texttt{UnivariateOreRing}.
\item \textbf{\texttt{QSAlgKsd}}: This is the main function of the algorithm, it shows the sequence
of all steps of the Quillen-Suslin algorithm presented in this paper; the arguments are the
idempotent matrix and the \texttt{UnivariateOreRing}, and return the matrix $UFU^{-1}$ in the form
of Theorem \ref{theorem9.6.1}, the matrices $U$ and $U^{-1}$, the basis of $\langle F\rangle$ and
the complete process step by step.
\end{itemize}
\end{itemize}
\end{remark}




\end{document}